\newlength{\itemlaenge}
\newtheoremstyle{mytheorem}
  {}
  {}
  {\slshape}
  {}
  {\scshape}
  {.}
  { }
  {}
\newtheoremstyle{mydefinition}
  {}
  {}
  {\upshape}
  {}
  {\scshape}
  {.}
  { }
  {}
\theoremstyle{mytheorem}
\newtheorem{lemma}{Lemma}[section]
\newtheorem{prop}[lemma]{Proposition}
\newtheorem*{prop*}{Proposition}
\newtheorem{prop_intro}{Proposition}
\newtheorem{thm_intro}[prop_intro]{Theorem}
\newtheorem*{thm*}{Theorem}
\theoremstyle{mydefinition}
\newtheorem{rem}[lemma]{Remark}
\newtheorem*{rem*}{Remark}
\newtheorem*{notation*}{Notation}
\newtheorem*{warning*}{Warning}
\newtheorem*{defi*}{Definition}
\numberwithin{equation}{section}
\newcommand{\bqn}{\begin{equation*}}
\newcommand{\eqn}{\end{equation*}}
\newcommand{\bq}{\begin{equation}}
\newcommand{\eq}{\end{equation}}
\newcommand{\ba}{\begin{aligned}}
\newcommand{\ea}{\end{aligned}}
\newcommand{\be}{\begin{enumerate}}
\newcommand{\ee}{\end{enumerate}}
\newcommand{\thismonth}{\ifcase\month 
  \or January\or February\or March\or April\or May\or June%
  \or July\or August\or September\or October\or November%
  \or December\fi}
\newcommand{\RR}{{\mathbb R}}
\newcommand{\Ff}{{\mathcal F}}
\def\h{{\rm H}}
\def\hb{{\rm H}_{\rm b}}
\def\cb{{\rm C}_{\rm b}}
\def\ch{{\rm C}}
\def\bdh{{\rm B}}
\def\one{\mathbf{1\kern-1.6mm 1}}
\def\id{{\it I\! d}}
\def\h2{{\operatorname{H_2}}}
\def\h1{{\operatorname{H_1}}}
\def\id{{\operatorname{Id}}}
\def\to{\rightarrow}
\def\hb{{\rm H}_{\rm b}}
\def\h{{\rm H}}
\renewcommand{\phi}{\varphi}
\def\No{N\raise4pt\hbox{\tiny o}\kern+.2em}
\def\no{n\raise4pt\hbox{\tiny o}\kern+.2em}
\begin{document}

\title[Isometric properties of  relative bounded cohomology]{Isometric properties of \\   relative bounded cohomology}

\author[M. Bucher et al.]{M. Bucher}
\address{Section de Math\'ematiques Universit\'e de Gen\`eve, 
2-4 rue du Li\`evre, Case postale 64, 1211 Gen\`eve 4, Suisse}
\email{Michelle.Bucher-Karlsson@unige.ch}

\author[]{M. Burger}
\address{Department Mathematik, ETH Zentrum, 
R\"amistrasse 101, CH-8092 Z\"urich, Switzerland}
\email{burger@math.ethz.ch}

\author[]{R. Frigerio}
\address{Dipartimento di Matematica, Universit\`a di Pisa, Largo B. Pontecorvo 5, 56127 Pisa, Italy}
\email{frigerio@dm.unipi.it}

\author[]{A. Iozzi}
\address{Department Mathematik, ETH Zentrum, 
R\"amistrasse 101, CH-8092 Z\"urich, Switzerland}
\email{iozzi@math.ethz.ch}

\author[]{C. Pagliantini}
\address{Dipartimento di Matematica, Universit\`a di Pisa, Largo B. Pontecorvo 5, 56127 Pisa, Italy}
\email{pagliantini@mail.dm.unipi.it}

\author[]{M. B. Pozzetti}
\address{Department Mathematik, ETH Zentrum, 
R\"amistrasse 101, CH-8092 Z\"urich, Switzerland}
\email{beatrice.pozzetti@math.ethz.ch}

\thanks{Michelle Bucher was supported by Swiss National Science Foundation 
project PP00P2-128309/1, Alessandra Iozzi was partial supported by the 
Swiss National Science Foundation project 2000021-127016/2.  The first five
named authors thank the Institute Mittag-Leffler in Djursholm, Sweden, 
for their warm hospitality during  the preparation of this paper.}

\keywords{Relative bounded cohomology, isometries in bounded cohomology,simplicial volume, 
Dehn filling, $\ell^1$-homology, Gromov equivalence theorem}

\subjclass[2010]{55N10 and 57N65}

\date{\today}

\begin{abstract} 
We show that the morphism induced by the inclusion of pairs $(X,\emptyset)\subset (X,Y)$ 
between the relative bounded cohomology of $(X,Y)$ and 
the bounded cohomology of $X$ is an {\it isometric} isomorphism in degree at least $2$ 
if the fundamental group of each connected component of $Y$ is amenable.
As an application, we provide a self-contained proof of Gromov's Equivalence
Theorem and a generalization of a result by Fujiwara and Manning
on the simplicial volume of generalized Dehn fillings.
\end{abstract}
\maketitle
%
%
%
%

\section{Introduction}\label{sec:intro}
In the mid seventies, Gromov introduced the bounded cohomology of a space 
and showed that  it vanishes in all degrees $n\geq1$ for simply connected CW-complexes \cite{Gromov_82}.
Brooks pointed out that this implies that the bounded cohomology 
of a space is isomorphic to the one of its fundamental group \cite{Brooks}.
In the latter note the author also made the first step towards the relative homological algebra
approach to the bounded cohomology of groups.   Ivanov then developed this
approach (with trivial coefficients) \cite{Ivanov}, 
incorporating the seminorm into the theory.  This led to the final form of Gromov's
theorem, namely that for a countable CW-complex the bounded cohomology 
is isometrically isomorphic to the bounded cohomology of its fundamental group.
We emphasize that, here and in the sequel, the coefficients are the trivial module $\RR$.

After Gromov's seminal paper \cite{Gromov_82}, bounded cohomology has admitted 
many generalizations and applications in a variety of contexts, though, ever since 
Ivanov's proof of Gromov's theorem, a sore point in the theory has been to establish 
that a given isomorphism is isometric. In this note, we will study this question 
for relative homology and bounded cohomology.

Bounded cohomology can be defined for pairs $(X,Y)$ of spaces, 
where $Y$ is a subspace of the space $X$, and there is an exact sequence
\bqn
\xymatrix@1{
\dots\ar[r]
&\hb^{n-1}(Y)\ar[r]^-{\delta^n}
&\hb^n(X,Y)\ar[r]^-{j^n}
&\hb^n(X)\ar[r]^{i^n}
&\hb^n(Y)\ar[r]
&\dots\,,
}
\eqn
where $j^n$ is induced by the inclusion of the corresponding cochain
complexes, $i^n$ is induced by the restriction map  
and $\delta^n$ is the connecting homomorphism.

A striking consequence of this long exact sequence can be obtained when we assume that 
each connected component of $Y$ has amenable fundamental group.
Indeed, as observed by Trauber in the 70's, one of the characteristic features
of bounded group cohomology  is that it vanishes for amenable groups in degree $n\geq1$.
This implies that $j^n$ is an isomorphism of vector spaces for $n\geq2$. 
In low degree, the isomorphism does not hold. Instead, it follows from $\hb^1(X)=0$ that we have an exact sequence
\bqn
\xymatrix@1{\hb^{0}(X,Y)\ \ar@{^{(}->}[r]^-{j^0}
&\hb^0(X)\ar[r]^-{i^0}
&\hb^0(Y)\ar@{->>}[r]^-{\delta^1}
&\hb^1(X,Y)\,.
}
\eqn
If $X$ is path connected then $\hb^0(X,Y)=0$ and $\hb^0(X)=\mathbb{R}$, 
while $\hb^0(Y)=\ell^\infty(\pi_0(Y))$. Here and in the sequel, $\ell^\infty(S)$ 
is the Banach space of all bounded real valued functions on the set $S$.

Our main result is that, under the above hypotheses,
$j^n$ is isometric:  

\begin{thm_intro}\label{thm:main}  Let $X\supseteq Y$ be a pair of countable CW-complexes.  
Assume that each connected component of $Y$ has amenable fundamental group.
Then the morphism obtained from the inclusion
\bqn
\xymatrix@1{
j^n:\hb^n(X,Y)\ar[r]
&\hb^n(X)
}
\eqn
is an isometric isomorphism for every $n\geq2$.
\end{thm_intro}

\subsection*{Gromov's Equivalence Theorem}
Beside the obvious $\ell^1$-seminorm, the relative homology of a pair of spaces can be endowed with a whole one parameter family of seminorms introduced by Gromov~\cite[Section 4.1]{Gromov_82}.  Indeed, let $(X,Y)$ be a pair of 
topological spaces and take a singular chain $c\in \ch_n(X)$ for $n\in \mathbb{N}$. Then, 
for every $\theta\geq 0$, one can define 
a norm on $\ch_n(X)$  by setting
\begin{equation*}\label{Gromov's norm}
\|c\|_1(\theta)=\|c\|_1+\theta\|d_n c\|_1\, . 
\end{equation*}
Taking the infimum value over the suitable sets of representatives,
this norm induces a seminorm on the relative homology module $\h_\bullet(X,Y)$, which is still
denoted by $\|\cdot \|_1(\theta)$. Notice that, for every $\theta\in [0,\infty)$, the norm $\|\cdot \|_1(\theta)$ is equivalent, but not equal, to the 
usual $\ell^1$-norm $\|\cdot\|_1=\|\cdot\|_1(0)$ on $\h_\bullet(X,Y)$.
By
passing to the limit, one can also define the seminorm
$\|\cdot\|_1(\infty)$, which however need not be equivalent 
to $\|\cdot\|_1$. For example, $\|\alpha\|_1(\infty)=\infty$ when $\alpha\in \h_n(X,Y)$ 
is such that $\|\partial_n \alpha\|_1>0$, where $\partial_n\colon \h_n(X,Y)\to \h_{n-1}(Y)$ is the connecting
homomorphism of the sequence of the pair. 

The following result is stated by Gromov in~\cite{Gromov_82}
(see also Remark~\ref{original} for a comment about Gromov's
original statement). However, Gromov's proof of Theorem~\ref{equi:teo} is not carried out in details and 
relies on the rather technical theory of multicomplexes. 
In Section~\ref{equi:sec} we provide a complete and direct proof of 
Theorem~\ref{equi:teo} as a consequence of Theorem~\ref{thm:main}.

\begin{thm_intro}[{Equivalence Theorem, \cite[page 57]{Gromov_82}}]\label{equi:teo}
Let $X\supseteq Y$ be a pair of countable CW-complexes.
If the fundamental groups of all connected components of $Y$ are amenable,
then 
the seminorms $\|\cdot\|_1(\theta)$ on $\h_n(X,Y)$, for $n\geq 2$,
are equal for every $\theta\in[0,\infty]$.
\end{thm_intro}

In~\cite{Park_hom}, Park uses a mapping cone construction to compute
the relative $\ell^1$-homology of topological pairs, and endows this $\ell^1$-homology
with a one parameter family of seminorms. This approach may also be exploited
in the case of singular homology, and in this context it is not difficult
to show that Park's seminorms coincide with Gromov's. 
A dual mapping cone construction is then used in~\cite{Park} 
to define a one parameter family of dual seminorms on relative bounded 
cohomology\footnote{Unless otherwise stated, we understand that relative bounded cohomology
is endowed with the seminorm introduced by Gromov in~\cite[9, Section 4.1]{Gromov_82}, 
which is induced by the $\ell^\infty$-norm on relative cochains (see also Section~\ref{sec: Resolutions}).
This is the case, for example, in the statement of Theorem~\ref{thm:main}.}. 
The arguments developed in Section~\ref{equi:sec} for the proof of Theorem~\ref{equi:teo}, 
which are inspired by Park's approach, may further be refined to prove that Park's seminorms on cohomology coincide with the
usual Gromov seminorm, provided that the map $\hb^\bullet(X,Y)\to \hb^\bullet (X)$ 
is an isometric isomorphism\footnote{In general, Park's seminorms are different from 
Gromov's \cite[Proposition 6.4]{Frigerio_Pagliantini_measure}.}. 
Together with Theorem~\ref{thm:main} and a standard duality argument, 
this fact can be exploited to provide another proof of Theorem~\ref{equi:teo}.



As noticed by Gromov, Theorem~\ref{equi:teo} admits the following equivalent formulation, which is inspired 
by Thurston~\cite[Section 6.5]{Thurston_notes}:

\begin{thm_intro}\label{Thurston's version}
Let $X\supseteq Y$ be a pair of countable CW-complexes, and
suppose that the fundamental groups of all the components of $Y$ are amenable.
Let $\alpha\in \h_n(X,Y)$, $n\geq 2$. Then,
for every $\epsilon>0$ there exists a representative $c\in \ch_n(X)$ of $\alpha$ such that 
$\|c\|_1<\|\alpha\|_1+\epsilon$ and $\|d_n c\|_1<\epsilon$.
\end{thm_intro}

Theorem~\ref{Thurston's version} plays an important role
in several results about the 
(relative) simplicial volumes of glueings and fillings. In Section~\ref{Thurston} we provide
a proof of the equivalence between the statements of Theorem~\ref{equi:teo}
and~\ref{Thurston's version}.

Let $(X,Y)$ satisfy the hypothesis of Theorem~\ref{Thurston's version}, 
and let $n\geq 2$.
Via L\"oh's  "translation mechanism" \cite{Loeh}, 
Theorem~\ref{thm:main} implies that the natural map 
$\h_n^{\ell^1}(X)\to \h_n^{\ell^1}(X,Y)$  
on $\ell^1$-homology
is an isometric isomorphism. As a consequence, since the 
maps $\h_n(X)\to \h_n^{\ell^1}(X)$
and $\h_n(X,Y)\to \h_n^{\ell^1}(X,Y)$ induced by the inclusions of singular chains 
in $\ell^1$-chains are norm preserving~\cite{Loeh},
the homology map $j_n\colon\h_n(X)\rightarrow \h_n(X,Y)$ 
is norm preserving, although it is surely not an isomorphism in general. This implies
that every class lying in $j_n(\h_n(X))\subseteq \h_n(X,Y)$
satisfies the conclusion of Theorem~\ref{Thurston's version}.  

In the general case, the isometric isomorphism between 
$\h_n^{\ell^1}(X,Y)$ and $\h_n^{\ell^1}(X)$ ensures that
every ordinary relative homology
class $\alpha\in \h_n(X,Y)$ may be represented 
(in the corresponding relative $\ell^1$-homology module)
by an absolute $\ell^1$-cycle
$c$ whose norm is close to $\|\alpha\|_1$. One may wonder whether
the finite approximations $c_i$ of $c$ may be used to construct
the representative required in Theorem~\ref{Thurston's version}, since
the $\ell^1$-norm of $d_nc_i$ is approaching zero as $i$ tends to infinity. 
However, it is not clear how to control the support of $d_n c_i$, which may not be contained
in $Y$.

\subsection*{Simplicial volume of generalized Dehn fillings}
Let $M$ be the natural compactification of a complete finite volume hyperbolic 
$n$-manifold with toric cusps.
A \emph{generalized Dehn filling}
of $M$ was defined by Fujiwara and Manning in~\cite{Fujiwara_Manning_JDG} 
as the space obtained by replacing the cusps of the interior of $M$ with compact partial cones of their
boundaries (see Section~\ref{filling:sec} for a precise definition). Moreover,
in~\cite{Fujiwara_Manning} they proved that the simplicial volume 
does not increase under generalized Dehn filling. Note that in dimension $3$, 
the notion of generalized Dehn filling coincides with the usual notion of Dehn filling,
and the fact that the (relative) simplicial volume of any cusped hyperbolic $3$-manifold
strictly decreases under Dehn filling is a classical result by Thurston~\cite{Thurston_notes}. 

Fujiwara and Manning's argument
easily extends to the case in which  the
fundamental group of $M$ is residually finite and the inclusion of each boundary
torus in $M$ induces an injective map on fundamental groups. Recall that 
these conditions are always fulfilled if the interior of $M$
is a complete finite-volume hyperbolic manifold.
In Section~\ref{filling:sec} we generalize Fujiwara and Manning's result to the
case of an arbitrary manifold with toric boundary:

\begin{thm_intro}\label{Dehn filling}
Let $M$ be a compact orientable
$n$-manifold with boundary given by a union of tori, and let $N$ 
be a generalized Dehn filling of $M$.
Then
$$\|N \|\leq \|M,\partial M\|.$$
\end{thm_intro}

We provide two slightly different proofs of Theorem~\ref{Dehn filling}. 
The first one makes use of the Equivalence Theorem 
(or more precisely its reformulation given in Theorem~\ref{Thurston's version}), 
the other one relies directly  on Theorem~\ref{thm:main}.

\section{Resolutions in bounded cohomology}\label{sec: Resolutions}

Let $X$ be a space, where here and in the sequel by a space we will always mean a countable CW-complex. 
We denote by $\cb^n(X)$ the complex of bounded real valued
$n$-cochains on $X$ and, if $Y\subset X$ is a subspace, 
by $\cb^n(X,Y)$ the subcomplex of those bounded cochains
that vanish on simplices with image contained in $Y$.  All these spaces of cochains
are endowed with the $\ell^\infty$-norm and the corresponding cohomology groups
are equipped with the corresponding quotient seminorm.

For our purposes, it is important to  observe that the universal covering map
$p:\widetilde X\to X$ induces an isometric identification of the complex 
$\cb^n(X)$ with the complex $\cb^n(\widetilde X)^\Gamma$
of $\Gamma:=\pi_1(X)$-invariant bounded cochains on $\widetilde X$.
Similarly, if $Y':=p^{-1}(Y)$, we obtain an isometric identification
of the complex $\cb^n(X,Y)$ with the complex 
$\cb^n(\widetilde X,Y')^\Gamma$ of $\Gamma$-invariants
of $\cb^n(\widetilde X,Y')$.  

The main ingredient in the proof of Theorem~\ref{thm:main}, 
which is also essential in the proof of Gromov's theorem, 
is the result of Ivanov \cite{Ivanov} that the complex of  $\Gamma$-invariants of \bqn
\xymatrix{
\RR\ar[r]
&\cb^0(\widetilde X)\ar[r]
&\cb^1(\widetilde X)\ar[r]
&\dots
}
\eqn
computes the bounded cohomology of $\Gamma$. In fact, we will use the more precise statement 
that the latter complex is a strong resolution of $\RR$ by relatively injective $\Gamma$-Banach modules 
(see \cite{Ivanov} for the definitions of strong resolutions and relatively injective modules). 

By standard homological algebra techniques  \cite{Ivanov}, 
it follows from the fact that $\cb^n(\widetilde X)$ is a strong resolution by $\Gamma$-modules and 
$\ell^\infty(\Gamma^{\bullet+1})$ is a cochain complex (even a strong resolution) 
by relatively injective $\Gamma$-modules that there exists   a $\Gamma$-morphism of complexes
\bq \tag{$\Diamond
$}\label{eq: g_n}
\xymatrix@1{
g^n:\cb^n(\widetilde X)\ar[r]
&\ell^\infty(\Gamma^{n+1})
}
\eq
extending the identity, and such that $g^n$ is contracting, i.e. $\|g^n\|\leq1$, for $n\geq0$. 
This map induces Ivanov's isometric isomorphism $\hb^\bullet(X)\rightarrow \hb^\bullet(\Gamma)$.

The second result we need lies at the basis of the fact that the bounded cohomology of $\Gamma$ 
can be computed isometrically from the complex of bounded functions on any amenable $\Gamma$-space.
We will need only a particular case of the isomorphism, which is the existence of a contracting map 
between the complex $\ell^\infty(\Gamma^{n+1})$ 
and  the complex of alternating bounded functions $\ell_{\mathrm{alt}}^\infty(S^{n+1})$ 
when $S$ is a discrete amenable $\Gamma$-space. This is a very special case of \cite{Monod_book}, for which we present a direct proof. 

\begin{prop}[{\cite[Theorem~7.2.1]{Monod_book}}]\label{prop:monod}
Assume that $\Gamma$ is a group acting on a set $S$ such that 
all stabilizers are amenable subgroups of $\Gamma$.  Then for $n\geq0$ 
there is a $\Gamma$-morphism of complexes
\bqn
\xymatrix{
\mu^n:\ell^\infty(\Gamma^{n+1})\ar[r]
&\ell_{\mathrm{alt}}^\infty(S^{n+1})\, }
\eqn
extending $\id_\RR:\RR\to \RR$ that is contracting.
\end{prop}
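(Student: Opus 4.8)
The plan is to build $\mu^\bullet$ in two stages: first an averaging (\emph{pushforward}) map produced from the amenable stabilizers, and then a standard antisymmetrization to land in the alternating subcomplex. I would begin by fixing the averaging data, treating each $\Gamma$-orbit of $S$ separately. Choose a set of representatives $\{s_\alpha\}$ for the orbits, write $\Gamma_\alpha:=\stab(s_\alpha)$ for the corresponding stabilizer, and --- using that $\Gamma_\alpha$ is amenable --- fix a left-invariant mean $m_\alpha$ on $\ell^\infty(\Gamma_\alpha)$. For $s$ in the orbit of $s_\alpha$, the set of $\gamma\in\Gamma$ with $\gamma s_\alpha=s$ is a single left coset $\gamma_s\Gamma_\alpha$, and I would pick one lift $\gamma_s$ for each $s$.

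Next I define the pushforward $P^n\colon\ell^\infty(\Gamma^{n+1})\to\ell^\infty(S^{n+1})$ by integrating out the coset ambiguity in each variable,
\bqn
(P^n f)(s_0,\dots,s_n)=m_{\alpha_0}\cdots m_{\alpha_n}\,f(\gamma_{s_0}h_0,\dots,\gamma_{s_n}h_n),
\eqn
where $s_i$ lies in the orbit of $s_{\alpha_i}$ and the means $m_{\alpha_i}$ are applied successively in the variables $h_i\in\Gamma_{\alpha_i}$. I would then verify four properties in turn: (i) \emph{well-definedness}, i.e. independence of the chosen lifts $\gamma_{s_i}$, which follows from left-invariance of each $m_{\alpha_i}$; (ii) \emph{contraction}, $\|P^n\|\le1$, since a mean is a positive unital functional; (iii) $\Gamma$-\emph{equivariance}, which holds because $g^{-1}\gamma_{s_i}$ is a lift of $g^{-1}s_i$ and $g^{-1}s_i$ lies in the \emph{same} orbit as $s_i$, so the same mean $m_{\alpha_i}$ governs it; and (iv) compatibility with the differentials, $d^n\circ P^n=P^{n+1}\circ d^n$, which reduces to the fact that applying $m_{\alpha}$ to a function not depending on its variable returns that function unchanged (normalization of the mean).

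Since $P^\bullet$ need not take values in alternating cochains, I would postcompose with the usual antisymmetrization operator on $\ell^\infty(S^{n+1})$,
\bqn
(\operatorname{Alt}^n F)(s_0,\dots,s_n)=\frac{1}{(n+1)!}\sum_{\sigma}\sign(\sigma)\,F(s_{\sigma(0)},\dots,s_{\sigma(n)}),
\eqn
and set $\mu^n:=\operatorname{Alt}^n\circ P^n$. The operator $\operatorname{Alt}^\bullet$ is a $\Gamma$-equivariant contracting chain map that acts as the identity on $\ell^\infty_{\mathrm{alt}}(S^{\bullet+1})$, so $\mu^\bullet$ is again a contracting $\Gamma$-morphism of complexes, now with image in the alternating subcomplex. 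Evaluating in degree $0$ on constant functions shows $\mu^0$ sends the constant $r$ to the constant $r$, i.e. $\mu^\bullet$ extends $\id_\RR$, as required.

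The step I expect to demand the most care is (iv), together with the bookkeeping of the iterated means. Because an invariant mean is only finitely additive there is no Fubini theorem, so I must apply the means one variable at a time; after fixing such an order, the cochain-map identity and the $\Gamma$-equivariance in (iii) go through, the former resting entirely on the normalization $m_\alpha(1)=1$. The multi-orbit equivariance in (iii) --- ensuring that the \emph{same} mean controls $s_i$ and all of its $\Gamma$-translates --- is the other place where the hypothesis that \emph{every} stabilizer is amenable enters essentially.
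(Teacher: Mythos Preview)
Your proof is correct and follows essentially the same approach as the paper: both reduce to alternation postcomposed with a pushforward $\ell^\infty(\Gamma^{n+1})\to\ell^\infty(S^{n+1})$ built from invariant means on the amenable stabilizers. The only cosmetic difference is that the paper constructs this pushforward inductively via the identification $\ell^\infty(\Gamma^{n+1})\cong\ell^\infty(\Gamma,\ell^\infty(\Gamma^n))$ and the already-defined $\mu^0,\mu^{n-1}$, whereas you write down the resulting iterated-mean formula directly; unraveling the paper's diagram yields exactly your $P^n$.
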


\begin{proof} Alternation gives a contracting $\Gamma$-morphism of complexes 
\bqn
\xymatrix{
\ell^\infty(S^{n+1})\ar[r]
&\ell^\infty_{\mathrm{alt}}(S^{n+1})\,,
}
\eqn
so that it suffices to construct a contracting $\Gamma$-morphism of complexes
\bqn
\xymatrix{
\mu^n:\ell^\infty(\Gamma^{n+1})\ar[r]
&\ell^\infty(S^{n+1})\,.
}
\eqn

We first construct $\mu^0$ and then inductively $\mu^n$, for $n\geq1$.  
Identify $S$ with a disjoint union $\sqcup_{i\in I}\Gamma/\Gamma_i$ 
of right cosets, where $\Gamma_i<\Gamma$ is amenable
and let $\lambda_i\in\ell^\infty(\Gamma_i)^\ast$ be a left $\Gamma_i$-invariant mean,
for every $i\in I$.
We define $\mu^0:\ell^\infty(\Gamma)\to\ell^\infty(S)$ for $f\in\ell^\infty(\Gamma)$
by setting $\mu^0(f)(\gamma\Gamma_i)$ to be the $\Gamma_i$-invariant mean $\lambda_i$ 
of the bounded function $\Gamma_i\to\RR$ defined by $\eta\mapsto f(\gamma\eta)$.
Clearly $\mu^0(\one_\Gamma)=\one_S$, so that $\mu^0$ extends 
$\id_\RR:\RR\to\RR$ and $\|\mu^0\|\leq1$.

Assume now that  we have defined $\mu^{n-1}:\ell^\infty(\Gamma^n)\to\ell^\infty(S^n)$.
Then we define $\mu^n$ as the composition of the following maps:
\bqn
\xymatrix{
\ell^\infty(\Gamma^{n+1})\ar[r]^-= \ar@{-->}[ddd]_{\mu^n}
&\ell^\infty(\Gamma\times\Gamma^n)\ar[r]^-\cong
&\ell^\infty(\Gamma,\ell^\infty(\Gamma^n))\ar[d]\\
&
&\ell^\infty(\Gamma,\ell^\infty(S^n))\ar[d]^-\cong\\
&
&\ell^\infty(S^n,\ell^\infty(\Gamma))\ar[d]\\
\ell^\infty(S^{n+1})
&\ell^\infty(S\times S^n)\ar[l]_-=
&\ell^\infty(S^n,\ell^\infty(S))\ar[l]\,,
}
\eqn
where $\cong$ denotes a Banach space isomorphism, 
while the first vertical arrow is induced by $\mu^{n-1}:\ell^\infty(\Gamma^n)\to\ell^\infty(S^n)$
and the third by $\mu^0:\ell^\infty(\Gamma)\to\ell^\infty(S)$.
Since all morphisms involved are contracting and 
equivariant for suitable $\Gamma$-actions, the same holds for $\mu^n$.
Finally one verifies that $(\mu^n)_{n\geq0}$ is a morphism of complexes.
\end{proof}

\section{Proof of Theorem~\ref{thm:main}}\label{sec:proof}

Let, as above, $p:\widetilde X\to X$ be the universal covering map,
$\Gamma:=\pi_1(X)$ and $Y=\sqcup_{i\in I}C_i$ the decomposition of $Y$
into a union of connected components. If  $\check C_i$ is a choice of a connected component of $p^{-1}(C_i)$ 
and $\Gamma_i$ denotes the stabilizer of $\check C_i$ in $\Gamma$ then
\bqn
p^{-1}(C_i)=\bigsqcup_{\gamma\in\Gamma/\Gamma_i}\gamma\check C_i\,.
\eqn

Let $\Ff\subset\widetilde X\smallsetminus Y'$ be a fundamental domain 
for the $\Gamma$-action on $\widetilde X\smallsetminus Y'$, where $Y'=p^{-1}(Y)$ as before. 
Define the $\Gamma$-equivariant map 
\bqn
r:\widetilde X\to S:=\Gamma\sqcup\bigsqcup_{i\in I}\Gamma/\Gamma_i
\eqn
as follows:
\bqn
r(\gamma x):=
\begin{cases}
\hphantom{\Gamma}\,\gamma\in\Gamma&\text{ if }x\in\Ff,\\
\gamma\Gamma_i\in \Gamma/\Gamma_i &\text{ if } x \in\check C_i\,.
\end{cases}
\eqn
For every $n\geq0$ define 
\bqn
\xymatrix@1{
r^n:\ell^\infty_{\mathrm{alt}}(S^{n+1})\ar[r]&\cb^n(\widetilde X)}
\eqn
by 
\bqn
r^n(c)(\sigma)=c(r(\sigma_0),\dots,r(\sigma_n))\,,
\eqn
where $c\in\ell^\infty_{\mathrm{alt}}(S^{n+1})$ and $\sigma_0,...,\sigma_n\in \widetilde{X}$ 
are the vertices of a singular simplex $\sigma:\Delta^n\to\widetilde X$.
Clearly $(r^n)_{n\geq0}$ is a $\Gamma$-morphism of complexes extending the identity on $\RR$
and $\|r^n\|\leq1$ for all $n\geq0$.

Observe that if $n\geq1$ and $\sigma(\Delta^n)\subset Y'$, 
then there are $i\in I$ and $\gamma\in\Gamma$ such that $\sigma(\Delta^n)\subset\gamma\check C_i$.
Thus
\bqn
r(\sigma_0)=\dots=r(\sigma_n)=\gamma\Gamma_i
\eqn
and thus
\bqn
r^n(c)(\sigma)=c(\gamma\Gamma_i,\dots,\gamma\Gamma_i)=0\,,
\eqn
since $c$ is alternating.
This implies that 
the image of $r^n$ is in $\cb^n(\widetilde X,Y')$. Thus we can write $r^n=j^n\circ r_1^n$,
where $j^n:\cb^n(\widetilde X,Y')\hookrightarrow\cb^n(\widetilde X)$ is the inclusion
and $r_1^n:\ell^\infty_{\mathrm{alt}}(S^{n+1})\to\cb^n(\widetilde X,Y')$ is a norm decreasing $\Gamma$-morphism
that induces a norm non-increasing map\footnote{To avoid confusion, henceforth
we use a different notation for the chain and cochain maps and the 
induced homology and cohomology maps. 
This is contrary to our notation in the introduction.} 
in cohomology 
\bqn
\xymatrix@1{\h(r_1^n):\h^n(\ell^\infty_{\mathrm{alt}}(S^{\bullet+1})^\Gamma)\ar[r]&\hb^n(X,Y)}\,,
\eqn
for $n\geq1$.

Using the map $g^n$ defined in (\ref{eq: g_n}) and the map $\mu^n$ provided by 
Proposition ~\ref{prop:monod} since, for all $i$, the group $\Gamma_i$ is a quotient of $\pi_1(C_i)$, and hence amenable,
we have the following diagram
\begin{equation*}
\xymatrix{\cb^n(\widetilde{X})  \ar@{-->}[rrrd]_{\mathrm{extends \ Id}_\mathbb{R}} \ar[r]^{g^n} 
& \ell^\infty(\Gamma^{n+1})  \ar[r]^{\mu^n} & \ell^\infty_{\mathrm{alt}}(S^{n+1}) \ar[rd]^{r^n}\ar[r]^{r_1^n}_{\mathrm{for \ }n\geq 1} 
&\cb^n(\widetilde{X},Y') \ar[d]^{j^n} \\
&&&\cb^n(\widetilde{X}),} 
\end{equation*}
where the dotted map is the composition $r^n\circ \mu^n \circ g^n$ 
which is a $\Gamma$-morphism  of strong resolutions by relatively injective modules
extending the identity, and hence induces the identity on 
$\hb^n(X)=\h^n(\cb^\bullet(\widetilde X)^\Gamma)$.

We proceed now to show that, for $n\geq 2$, the map
\bqn
\xymatrix@1{
\h(j^n):\hb^n(X,Y)\ar[r]&\hb^n(X)
}
\eqn
induced by $j^n$  is an isometric isomorphism in cohomology. 
In view of the long exact sequence for pairs in bounded cohomology and 
the fact that $\hb^\bullet(Y)=0$ in degree greater than $1$, we already know that $\h(j^n)$ is an  isomorphism. 
Let us denote by $\psi^n$ the map induced in cohomology by the composition
$r_1^n\circ \mu^n\circ g^n$. 
From the above it follows that
\bqn
\h(j^n)\circ \psi^n=\id_{\hb^n(X)}\,.
\eqn
Let $y\in\hb^n(X,Y)$ and set $x=\h(j^n)(y)$. 
Then $\h(j^n)(\psi^n(x))=x$
and, as $\h(j^n)$ is injective, we get 
$
y=\psi^n(x)
$.
Since the maps $\h(j^n)$ and $\psi^n$ are norm non-increasing it follows that 
$$\|x\|_\infty=\ \|\h(j^n)(y)\|_\infty \leq \|y\|_\infty \mathrm{\ \ and \ \ }  \| y \| _\infty=\| \psi^n(x)\|_\infty \leq \| x\|_\infty$$ 
so that $\|\h(j^n)(y)\|_\infty=\|x\|_\infty=\|y\|_\infty$ and hence $\h(j^n)$ is norm preserving.

\section{Proof of Theorem~\ref{equi:teo}}\label{equi:sec}
Recall from the introduction that Gromov endowed 
the homology module $\h_n(X,Y)$ with a one parameter family of seminorms
$\|\cdot \|_1(\theta)$,  $\theta\in [0,\infty]$.
By definition, $\|\cdot \|_1(0)$ is equal to the usual $\ell^1$-seminorm
$\|\cdot\|_1$, while $\|\cdot \|_1(\infty)$ is defined by taking the limit of
$\|\cdot\|_1(\theta)$ as $\theta$ tends to infinity. Therefore, in order to prove Theorem~\ref{equi:teo}
it is sufficient to show that, if every component of $Y$ has amenable 
fundamental group, then $\|\cdot \|_1(\theta)=\|\cdot \|_1$ for every $\theta\in (0,\infty)$.

As is customary, 
in order to compare seminorms in homology we will compare cocycles 
in bounded cohomology, and exploit the duality between homology and cohomology 
that is usually provided, in this context, by the Hahn-Banach Theorem (see \emph{e.g.}~\cite{Loeh}
for a detailed discussion of this issue).

In what follows, if $c\in \ch_n(X)$ is a representative of a class $\alpha\in \h_n(X,Y)$, 
with a slight abuse of notation $d_nc$ will be used to identify both
the element $d_n c\in \ch_{n-1}(X)$ and its preimage in $\ch_{n-1}(Y)$ via the inclusion
$i_{n-1}\colon \ch_{n-1}(Y)\to \ch_{n-1}(X)$. As mentioned in the introduction,
the proof of the following result is inspired by some techniques 
developed by Park in~\cite{Park_hom} and~\cite{Park}.

\begin{prop}\label{hiddencone:prop}
Let $\theta\in (0,\infty)$ and take $\alpha\in \h_n(X,Y)$.
Then there exist $f\in \cb^n(X)$,
$g\in \cb^{n-1}(Y)$ such that the following conditions hold:
\begin{enumerate}
 \item 
$d^nf=0$ and
$i^n(f)=-d^{n-1} g$;
\item
$f(c)+g(d_n c)=\|\alpha\|_1(\theta)$ for every representative $c\in \ch_n(X)$ of $\alpha$;
\item 
$\|f\|_\infty\leq 1$.
\end{enumerate}
\end{prop}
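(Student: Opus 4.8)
The plan is to realize $\|\alpha\|_1(\theta)$ as the distance from a single point to a linear subspace in an auxiliary normed space, and then to extract $(f,g)$ as the norming functional furnished by Hahn--Banach; this is the concrete incarnation, for the mapping cone of the inclusion $\ch_\bullet(Y)\hookrightarrow\ch_\bullet(X)$, of the duality between the $\theta$-seminorm on homology and bounded cohomology.

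First I would work in the normed vector space $V=\ch_n(X)\oplus\ch_{n-1}(Y)$ with norm $\|(c,z)\|=\|c\|_1+\theta\|z\|_1$ (a genuine norm precisely because $\theta\in(0,\infty)$). Since the $\ell^1$-dual of a space of chains is the corresponding space of bounded cochains, and the dual of a sum-norm is the max-norm, the continuous dual $V^\ast$ is identified with $\cb^n(X)\oplus\cb^{n-1}(Y)$ acting by $(f,g)\cdot(c,z)=f(c)+g(z)$, with dual norm $\max\{\|f\|_\infty,\,\theta^{-1}\|g\|_\infty\}$. Fixing one representative $c_0$ of $\alpha$, set $v_0=(c_0,d_nc_0)\in V$ (well defined because $d_nc_0\in\ch_{n-1}(Y)$) and
\[
L=\{(d_{n+1}b+w,\,d_nw):b\in\ch_{n+1}(X),\,w\in\ch_n(Y)\}\subset V .
\]
Because every representative of $\alpha$ has the form $c=c_0+d_{n+1}b+w$ and then $d_nc=d_nc_0+d_nw$, the coset $v_0+L$ is exactly $\{(c,d_nc):c\text{ represents }\alpha\}$, whence $\mathrm{dist}(v_0,L)=\inf_c(\|c\|_1+\theta\|d_nc\|_1)=\|\alpha\|_1(\theta)$.

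Next I would apply Hahn--Banach in its supporting-functional form (pass to $V/L$, where the image of $v_0$ has quotient seminorm $\mathrm{dist}(v_0,L)$, choose a norming functional on its span and extend it): there is $\phi=(f,g)\in V^\ast$ with $\|\phi\|\leq 1$, $\phi|_L=0$ and $\phi(v_0)=\|\alpha\|_1(\theta)$. Continuity of $\phi$ guarantees that $f,g$ are genuinely bounded, i.e. $f\in\cb^n(X)$ and $g\in\cb^{n-1}(Y)$. The bound $\|\phi\|\leq 1$ gives $\|f\|_\infty\leq 1$, which is (3) (it also yields $\|g\|_\infty\leq\theta$, not needed here). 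Reading $\phi|_L=0$ as $f(d_{n+1}b)+f(w)+g(d_nw)=0$ for all $b,w$ and using $f(d_{n+1}b)=(d^nf)(b)$, $f(w)=(i^nf)(w)$, $g(d_nw)=(d^{n-1}g)(w)$: setting $w=0$ forces $d^nf=0$, and setting $b=0$ forces $i^n(f)=-d^{n-1}g$, which is (1).

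Finally, (1) is precisely what makes $c\mapsto f(c)+g(d_nc)$ constant on the representatives: for $c=c_0+d_{n+1}b+w$ one computes $f(c)+g(d_nc)-\big(f(c_0)+g(d_nc_0)\big)=(d^nf)(b)+(i^nf+d^{n-1}g)(w)=0$, so with $\phi(v_0)=f(c_0)+g(d_nc_0)=\|\alpha\|_1(\theta)$ we obtain (2) for every representative; the degenerate case $\|\alpha\|_1(\theta)=0$ is covered by $\phi=0$. I expect the only real care to be demanded by the bookkeeping of the dual norm (the position of $\theta$ versus $\theta^{-1}$, which is exactly what singles out the coefficient $1$ on $\|c\|_1$ and hence the bound $\|f\|_\infty\leq 1$) and by the recognition that requirement (2), although stated for all representatives, is not independent input but follows from (1) together with the single equality $\phi(v_0)=\|\alpha\|_1(\theta)$.
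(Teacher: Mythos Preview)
Your proof is correct and is essentially the same argument as the paper's: the paper builds the same normed space $V=\ch_n(X)\oplus\ch_{n-1}(Y)$ with the same norm, identifies its dual in the same way, uses the same subspace (written as $W=W_1+W_2$ with $W_1$ the boundaries and $W_2$ the graph of $d_n$ on $\ch_n(Y)$, which is exactly your $L$), observes that $\|\alpha\|_1(\theta)$ is the distance from $(c,d_nc)$ to this subspace, and applies Hahn--Banach. Your treatment is if anything slightly tidier in explicitly noting the degenerate case $\|\alpha\|_1(\theta)=0$ and in spelling out that (2) follows from (1) plus the single value $\phi(v_0)$.
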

\begin{proof}
 Let us consider the direct sum
$$
V=\ch_n(X)\oplus \ch_{n-1} (Y)\, ,
$$
and endow $V$ with the norm $\|\cdot\|_1(\theta)$ defined by
$$
\|(u,v)\|_1(\theta)=\|u\|_1+\theta\|v\|_1\, .
$$
Let us also set 
$$
V^*=\cb^n(X)\oplus \cb^{n-1}(Y)\, ,
$$
and endow $V^*$ with the $\ell^\infty$-norm $\|\cdot\|_\infty (\theta)$ defined by
$$
\|(f,g)\|_\infty(\theta)=\max\{\|f\|_\infty ,\theta^{-1}
\|g\|_\infty\}\ .
$$
It is readily seen that $(V^*,\|\cdot\|_\infty(\theta))$ is isometrically identified to the topological dual of $(V,\|\cdot\|_1(\theta))$ via the pairing 
$$
V^*\times V
\to\mathbb{R},\qquad 
((f,g),(u,v))\mapsto f(u)+g(v)\, .
$$
Let $\bdh_n(X)\subseteq \ch_n(X)$ be the space of absolute $n$-boundaries of $X$, and let us set
$W_1=\bdh_n(X)\oplus \{0\}\subseteq V$. We also set $$
W_2=\{(u,v)\in V\, |\, u=i_{n} (z)\, , v=d_n z\ {\rm for\ some}\ z\in \ch_{n}(Y)\}\, \subseteq\, V\, ,
$$
and $W=W_1+W_2$.
It is easy to verify that two relative cycles $c$, $c'\in \ch_n(X)$ represent the same element in
$\h_n(X,Y)$ if and only if $(c,d_nc)-(c',d_nc')$ lies in $W$.
Let $c\in \ch_n(X)$ be any representative of $\alpha\in \h_n(X,Y)$. 
Our previous remark implies that
\begin{equation}\label{dist:eq}
\|\alpha\|_1(\theta)=\inf \{\|(c,d_nc)-w\|_1(\theta)\, |\, w\in W\}={\rm dist}((c,d_n c), W)\, ,
\end{equation}
where the last distance  is computed of course with respect to the norm $\|\cdot \|_1(\theta)$ on $V$. 

Now, an easy application of the Hahn-Banach Theorem ensures that we may find a functional $(f,g)\in V^*$
such that the following conditions hold:
\begin{enumerate}
\item[(a)] 
$0=(f,g)(u,v)=f(u)+g(v)$ for every $(u,v)\in W$;
 \item[(b)] 
 $(f,g)(c,d_n c)=f(c)+g(d_n c)={\rm dist}((c,d_n c), W)=\|\alpha\|_1(\theta)$;
\item[(c)] $\|(f,g)\|_\infty(\theta)=1$.
\end{enumerate}
The fact that $(f,g)$ vanishes on $W_1$ implies that $d^n f=0$, while $(f,g)|_{W_2}=0$ implies that $i^n(f)=-d^{n-1}g$, so (a) implies point 
(1) of the statement. Point (1) implies in turn that $f(c')+g(d_n c')=f(c)+g(d_n c)$ for every representative
$c'\in\ch_n(X)$ of $\alpha$, so
point (2) is a consequence of (b). Since $\|f\|_\infty\leq \|(f,g)\|_\infty(\theta)$, 
point (3) is a consequence of (c).
\end{proof}

We are now ready to prove Theorem~\ref{equi:teo}. Suppose that the fundamental group of every component of $Y$ is amenable, let $n\geq 2$
and take an element $\alpha\in \h_n(X,Y)$. Also take $\theta\in (0,\infty)$. 
Since the inequality $\|\alpha\|_1\leq \|\alpha\|_1(\theta)$ is obvious, we need to show that 
$$
\|\alpha\|_1(\theta)\leq \|\alpha\|_1\, .
$$

Let $f\in \cb^n(X)$, $g\in \cb^{n-1}(Y)$ be chosen as in the statement
of Proposition~\ref{hiddencone:prop}. Of course we may extend $g$ to an element $\hat{g}\in \cb^{n-1}(X)$
such that $i^{n-1}(\hat{g})=g$ and $\|\hat{g}\|_\infty=\| g\|_\infty$ (for example, we may extend $g$ to zero on simplices
that are not contained in $Y$). Let now $f'=f+d^{n-1} \hat{g}$,
and let $c\in \ch_n(X)$ be any representative of $\alpha$.
 By point (2) of Proposition~\ref{hiddencone:prop} we have
\begin{equation}\label{fprime}
f'(c)=(f+d^{n-1}\hat{g})(c)=f(c)+\hat{g}(d_n c)=\|\alpha\|_1(\theta)\, .
\end{equation}
Point (1)
of Proposition~\ref{hiddencone:prop} imply that $f'$ is a relative cocycle.
We denote by $[f']\in\hb^n(X,Y)$ the corresponding relative cohomology class.

 Let us now recall that by Theorem~\ref{thm:main} the map
$$
\h(j^n)\colon \hb^n(X,Y)\to \hb^n(X)
$$ is an isometric isomorphism.
If $[f]$ denotes the class of $f$ in $\hb^n(X)$, then 
the equality
$f'=f+d^{n-1}\hat{g}$ implies that $\h(j^n)([f'])=[f]$.
As a consequence of Theorem~\ref{thm:main}, this gives in turn that
$$
\|[f']\|_\infty =\|[f]\|_\infty \leq 1\, ,
$$
where the last inequality follows from point (3) of Proposition~\ref{hiddencone:prop}.
In other words, for every $\varepsilon>0$ we may find a cochain $f''\in \cb^{n-1}(X,Y)$ such that $\|f'+d^{n-1}f''\|_\infty\leq 1+\varepsilon$.
Since $f''$ vanishes on $d_n c\in \ch_{n-1}(Y)$, using Equation~\eqref{fprime} we may now conclude that
$$
\|\alpha\|_1(\theta)=f'(c)=(f'+d^{n-1}f'')(c)\leq \|f'+d^{n-1}f''\|_\infty \|c\|_1\leq (1+\varepsilon)\|c\|_1\, .
$$
Since this inequality holds for every representative $c\in \ch_n(X)$ of $\alpha$ and for every $\varepsilon>0$, we finally have
that
$$
\|\alpha\|_1(\theta)\leq \|\alpha\|_1\, .
$$
This concludes the proof of Theorem~\ref{equi:teo}.

\begin{rem}\label{original}
Since the norm $\|\cdot \|_1(\theta)$ on relative
chains is equivalent to the usual
norm $\|\cdot\|_1=\|\cdot\|_1(0)$  for every $\theta\in [0,\infty)$, 
a relative cochain is bounded with respect to $\|\cdot \|_1(\theta)$
if and only if it is bounded with respect to $\|\cdot \|_1$. Therefore, one may endow
the set $\cb^\bullet(X,Y)$ with
the dual norm $\|\cdot\|_\infty(\theta)$ of $\|\cdot \|_1(\theta)$. 
This norm induces a seminorm on the relative bounded cohomology
module $\h^\bullet (X,Y)$,
already introduced by Gromov in~\cite{Gromov_82},
and still denoted by $\|\cdot\|_\infty(\theta)$. Gromov's original formulation
of the Equivalence Theorem contains two statements.
The second one (which is the most widely exploited for applications) is just Theorem~\ref{equi:teo} stated above. The first one
asserts that, when all the components of $Y$ have amenable fundamental groups, the seminorm  $\|\cdot\|_\infty(\theta)$ on $\hb^n(X,Y)$, $n\geq 2$,
does not depend on $\theta$. 
\end{rem}

\section{Thurston's seminorms on relative homology\\ and proof of Theorem~\ref{Thurston's version}}\label{Thurston}
Let us describe a family of seminorms on $\h_n(X,Y)$ that was introduced by
Thurston~\cite[Section 6.5]{Thurston_notes}.
For every $\alpha\in \h_n(X,Y)$ and $t>0$ we set
$$\|\alpha\|_{(t)}=\inf\{\|z\|_1\, |\, z\in \ch_n(X,Y),\, [z]=\alpha,\, \|d_n z\|_1\leq t\}.$$
Note that we understand that $\inf \emptyset=+\infty$. Following Thurston\footnote{The norm $\|\alpha\|_{(0)}$ is denoted by $\|\alpha\|_{0}$ in \cite{Thurston_notes}. We introduce the parenthesis in the notation for $\|\alpha\|_{(t)}$ to avoid any ambiguity when $t=1$.}, we set
$$\|\alpha\|_{(0)}=\lim_{t\rightarrow 0}\|\alpha\|_{(t)}=\sup_{t > 0}\|\alpha\|_{(t)}.$$

It readily follows from the definitions that Theorem~\ref{Thurston's version}
is equivalent to the statement that $\|\alpha\|_1=\|\alpha\|_{(0)}$ for every
$\alpha\in \h_n(X,Y)$, $n\geq 2$, provided that the fundamental group of each component
of $Y$ is amenable. Therefore, 
the equivalence between Theorems~\ref{equi:teo} and~\ref{Thurston's version}
is an immediate consequence of the following lemma
which is stated in~\cite[page 56]{Gromov_82} and proved here below
for the sake of completeness.

\begin{lemma}\label{GroThu}
 The seminorms $\|\cdot\|_1(\infty)$ and $\|\cdot \|_{(0)}$ on $\h_n(X,Y)$ coincide.
\end{lemma}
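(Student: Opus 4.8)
The plan is to reduce the identity to the elementary observation that, for a fixed relative cycle $c$, the quantity $\|c\|_1+\theta\|d_n c\|_1$ \emph{penalises} the boundary while Thurston's definition \emph{constrains} it; the coincidence of the two extreme seminorms is then just the statement that penalising with weight $\theta\to\infty$ and constraining with bound $t\to 0$ produce the same optimum over the (common) set of representatives of $\alpha$. First I would record the two monotonicity facts that turn both seminorms into suprema: since $\theta\mapsto \|c\|_1+\theta\|d_n c\|_1$ is non-decreasing for each representative $c$, the seminorm $\|\alpha\|_1(\theta)$ is non-decreasing in $\theta$ and hence $\|\alpha\|_1(\infty)=\sup_{\theta>0}\|\alpha\|_1(\theta)$; dually $\|\alpha\|_{(t)}$ is non-increasing in $t$, so $\|\alpha\|_{(0)}=\sup_{t>0}\|\alpha\|_{(t)}$ and in particular $\|\alpha\|_{(t)}\le \|\alpha\|_{(0)}$ for every $t>0$. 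After this it suffices to prove the two inequalities separately, in each case choosing representatives that are optimal up to $\varepsilon$.

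For $\|\alpha\|_1(\infty)\le\|\alpha\|_{(0)}$ I would argue as follows, the case $\|\alpha\|_{(0)}=\infty$ being vacuous. Fix $\theta\in(0,\infty)$ and $\varepsilon>0$; for each $t>0$ finiteness of $\|\alpha\|_{(0)}$ gives a representative $c$ of $\alpha$ with $\|d_n c\|_1\le t$ and $\|c\|_1\le \|\alpha\|_{(t)}+\varepsilon\le \|\alpha\|_{(0)}+\varepsilon$, whence $\|\alpha\|_1(\theta)\le \|c\|_1+\theta\|d_n c\|_1\le \|\alpha\|_{(0)}+\varepsilon+\theta t$. Letting $t\to 0^+$ and then $\varepsilon\to 0$ kills the penalty term and yields $\|\alpha\|_1(\theta)\le \|\alpha\|_{(0)}$; taking the supremum over $\theta$ gives the inequality.

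For the reverse inequality, set $L=\|\alpha\|_1(\infty)$, the case $L=\infty$ again being vacuous. The key mechanism is that a uniform bound $\|\alpha\|_1(\theta)\le L$ forces the boundary of a near-optimal representative to be small: fixing $t_0>0$ and $\varepsilon>0$, for each $\theta>0$ I would pick $c_\theta$ with $\|c_\theta\|_1+\theta\|d_n c_\theta\|_1\le \|\alpha\|_1(\theta)+\varepsilon\le L+\varepsilon$, so that simultaneously $\|c_\theta\|_1\le L+\varepsilon$ and $\|d_n c_\theta\|_1\le (L+\varepsilon)/\theta$. Choosing $\theta$ large enough that $(L+\varepsilon)/\theta\le t_0$ makes $c_\theta$ admissible in the definition of $\|\alpha\|_{(t_0)}$, giving $\|\alpha\|_{(t_0)}\le L+\varepsilon$; letting $\varepsilon\to 0$ and then taking the supremum over $t_0>0$ yields $\|\alpha\|_{(0)}\le L$. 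Combining the two inequalities proves the lemma.

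I expect the only delicate points to be bookkeeping rather than mathematics: keeping track of which quantities may equal $+\infty$, so that the two degenerate cases are discharged at the outset, and respecting the order of the limits $t\to 0^+$, $\varepsilon\to 0$, $\theta\to\infty$ together with the two suprema. Conceptually the heart of the matter, and the one step worth isolating, is the implication ``large penalty weight $\Rightarrow$ small boundary'', i.e.\ the estimate $\|d_n c_\theta\|_1\le (L+\varepsilon)/\theta$, which is what transfers a bound on the penalised seminorm into a bound for the constrained one. One could also package the argument structurally by introducing the value function $V(t)=\|\alpha\|_{(t)}$, checking $\|\alpha\|_1(\theta)=\inf_{t\ge 0}\big(V(t)+\theta t\big)$, and recognising the claim as the elementary fact that the associated Legendre-type transform recovers $\lim_{t\to 0^+}V(t)$ as $\theta\to\infty$; but the direct two-inequality argument above avoids any appeal to convexity of $V$.
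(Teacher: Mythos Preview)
Your proof is correct and follows essentially the same two-inequality strategy as the paper: for $\|\alpha\|_1(\infty)\le\|\alpha\|_{(0)}$ one bounds $\|\alpha\|_1(\theta)$ by $\|\alpha\|_{(t)}+\theta t$ and lets $t\to 0$, and for the reverse one uses that a near-optimal representative for $\|\alpha\|_1(\theta)$ has $\|d_n c\|_1\le (L+\varepsilon)/\theta$, hence is admissible for $\|\alpha\|_{(t_0)}$ once $\theta$ is large. The only cosmetic differences are that the paper runs the second half along the integer sequence $\theta=i$ rather than a continuous parameter, and handles the $\|\alpha\|_{(0)}=\infty$ case implicitly via the infimum rather than setting it aside.
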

\begin{proof}
Take $\alpha\in \h_n(X,Y)$.
 For every $\theta\in[0,\infty)$ and $t>0$
we have
\begin{align*}
\|\alpha\|_1(\theta)&\leq \inf_z\{\|z\|_1(\theta)\, |\, [z]=\alpha,\, \|d_nz\|_1\leq t\}\\
&\leq \inf_z\{\|z\|_1+\theta t\, |\, [z]=\alpha,\, \|d_nz\|_1\leq t\}\\
&=\|\alpha\|_{(t)}+\theta t.
\end{align*}
By passing to the limit on the right side for $t\to 0$ we get
$\|\alpha\|_1(\theta)\leq \|\alpha\|_{(0)}$ for every $\theta\in[0,\infty)$, so
$\|\alpha\|_1(\infty)\leq \|\alpha\|_{(0)}$.

Let us now prove the other inequality.
Of course  we may restrict to the case $\|\alpha\|_1(\infty)< \infty$. Let 
us fix $\epsilon>0$. By definition there exists
a sequence $\{z_i\}_{i\in\mathbb{N}}\subseteq \ch_n(X)$ such that $[z_i]=\alpha$
and $$\|z_i\|_1+i\|d_n z_i\|_1\leq \|\alpha\|_1(i)+\epsilon\leq\|\alpha\|_1(\infty)+\epsilon.$$
Since $\|\alpha\|_1(\infty)< \infty$ 
the sequence $\{\|d_n z_i\|_1\}_i$ converges to $0$. As a consequence,
for every $\delta>0$
there exists  $i_0\in \mathbb{N}$ such that
 $\|d_n z_{i_0}\|_1\leq \delta$, so that
$$
\|\alpha\|_{(\delta)}\leq \|z_{i_0}\|_1\leq \|z_{i_0}\|_1+i_0\|d_n z_{i_0}\|_1\leq \|\alpha\|_1(\infty)+\epsilon\ .
$$
Since this estimate holds for every $\delta>0$, we may pass to the limit for $\delta\to 0$
and obtain the inequality $\|\alpha\|_{(0)}\leq \|\alpha\|_1(\infty)+\epsilon$,
whence the conclusion since $\epsilon$ is arbitrary. 
\end{proof}

\section{Simplicial volume of generalized Dehn fillings}\label{filling:sec}
Let us begin by recalling the definition of generalized Dehn filling~\cite{Fujiwara_Manning_JDG}.
Let $n\geq 3$ and let $M$ be a compact orientable
$n$-manifold such that 
$\partial M=N_1\cup\ldots\cup N_m$, where $N_i$ is an $(n-1)$-torus for every $i$.

For each $i\in\{1,\dots, m\}$ we put on $N_i$ a flat structure,
and we choose a totally geodesic $k_i$-dimensional
torus  $T_i\subseteq N_i$, where $1\leq k_i\leq n-2$.
Each $N_i$ is foliated by parallel copies of $T_i$ with leaf space
$L_i$ which is homeomorphic to a $(n-1-k_i)$-dimensional torus. 
The \emph{generalized Dehn filling} $M(T_1,\dots,T_m)$ is 
defined as the quotient of ${M}$
obtained  by collapsing
$N_i$ on $L_i$ for every $i\in\{1,\dots,m\}$. Observe that unless $k_i=1$ for every $i$, $M(T_1,\dots,T_m)$ is not a manifold. However, being a pseudomanifold,
$M(T_1,\dots,T_m)$
admits a fundamental class,
whence a well-defined simplicial volume.



We propose two proofs of Theorem~\ref{Dehn filling}. The first proof follows very closely Fujiwara and Manning's
approach, which is in turn inspired by Thurston~\cite{Thurston_notes}.
In fact, in~\cite{Fujiwara_Manning} the authors provided both an explicit
homological proof of the Equivalence Theorem in the case
of manifolds with $\pi_1$-injective toric boundary and
residually finite fundamental group, and an explicit
proof of the uniform boundary condition for tori. The second alternative short proof of the theorem 
relies more directly on the isometry proved in Theorem~\ref{thm:main}, thus avoiding the explicit
use of the Equivalence Theorem.

\begin{proof}[First proof of Theorem~\ref{Dehn filling}] Let us set $N=M(T_1,\dots,T_m)$, $L=\sqcup_{i=1}^mL_i$ and let
$p_n\colon \ch_n(M,\partial M)\rightarrow \ch_n(N,L)$ be the map induced by the projection. 
By Theorem~\ref{Thurston's version}, for every $\epsilon>0$ there exists  a relative fundamental cycle 
$c\in \ch_n(M,\partial M)$
such that $\|c\|_1\leq\|M,\partial M\|+\epsilon$ and $\|d_n c\|_1\leq \epsilon$. 
Since each leaf space $L_i$ is  homeomorphic to a $(n-1-k_i)$-dimensional torus,
the cycle $p_n(d_nc)\in \ch_{n-1}(L)$ is a boundary.
Moreover, since $\pi_1(L)$ is amenable,
the module $\ch_{n-1}(L)$ satisfies Matsumoto-Morita's \emph{uniform boundary condition} \cite{Matsumoto_Morita}, so
there exist $K>0$ (independent of $c$) and $c'\in \ch_{n}(L)$ such that $d_nc'=p_n(d_nc)$ 
and $\|c'\|_1\leq K\|p_n(d_nc)\|_1$.
It is easy to check that  $p_n(c)-c'$ is a fundamental cycle for $N$ and
\begin{align*}
\|N\|&\leq\|p_n(c)-c'\|_1\leq\|p_n(c)\|_1+\|c'\|_1\\
&\leq \|c\|_1+\|c'\|_1\\
&\leq \|M,\partial M\|+\epsilon +K\|p_n(d_nc)\|_1\\
&\leq\|M,\partial M\|+\epsilon+K\epsilon.
\end{align*}
Since $\epsilon$ is arbitrary, this concludes the proof.
\end{proof}

\begin{proof}[Second proof of Theorem~\ref{Dehn filling}] 
Let $p:(M,\partial M)\rightarrow (N,L)$ 
be the projection map and $j\colon (N,\emptyset)\hookrightarrow (N,L)$ be the inclusion map.
By the exact sequence of the pair the inclusion map induces an isomorphism
$\h(j_n)\colon \h_n(N)\rightarrow \h_n(N,L)$.
Since every component of $L$ has amenable fundamental group,
Theorem~\ref{thm:main} implies that the inclusion map induces an isometric isomorphism 
$\h(j^n)\colon \hb^n(N,L)\rightarrow \hb^n(N)$ in bounded cohomology.
Via the translation principle, this implies in turn that also
$\h(j_n)\colon \h_n(N)\rightarrow \h_n(N,L)$ is an isometry.

Denoting by $\psi_n$ the inverse of $\h(j_n)$, it is easy to verify that  
$\psi_n(\h(p_n)([M,\partial M]))$ is a fundamental class for $N$.
Since $\h(p_n)$ is contracting and 
$\psi_n$ is an isometry, we now have that
$$
\begin{array}{lllll}
\|N\|_1&=&\|\psi_n(\h(p_n)([M,\partial M]))\|_1&=&\|\h(p_n)([M,\partial M])\|_1\\&\leq&
\|[M,\partial M]\|_1&=&\|M,\partial M\|\ ,
\end{array}
$$
which finishes the proof of the theorem.
\end{proof}

\vskip1cm

\providecommand{\bysame}{\leavevmode\hbox to3em{\hrulefill}\thinspace}
\providecommand{\MR}{\relax\ifhmode\unskip\space\fi MR }
\providecommand{\MRhref}[2]{%
  \href{http://www.ams.org/mathscinet-getitem?mr=#1}{#2}
}
\providecommand{\href}[2]{#2}



\begin{thebibliography}{10}

\bibitem{Brooks}
Robert Brooks, \emph{Some remarks on bounded cohomology}, Riemann surfaces and
  related topics: {P}roceedings of the 1978 {S}tony {B}rook {C}onference
  ({S}tate {U}niv. {N}ew {Y}ork, {S}tony {B}rook, {N}.{Y}., 1978), Ann. of
  Math. Stud., vol.~97, Princeton Univ. Press, Princeton, N.J., 1981,
  pp.~53--63. \MR{624804 (83a:57038)}






\bibitem{Frigerio_Pagliantini_measure}
Roberto Frigerio and Cristina Pagliantini, \emph{Relative measure homology and
  continuous bounded cohomology of topological pairs},
  http://arxiv.org/abs/1105.4851, to appear in Pacific J.~Math.

\bibitem{Fujiwara_Manning_JDG}
Koji Fujiwara and Jason~F. Manning, \emph{${\rm CAT(0)}$ and ${\rm CAT}(-1)$ fillings of hyperbolic manifolds},
J. Differential Geom.  \textbf{85}  (2010),  no.~2, 229--269.
\MR{2732977 (2012d:57037)}

\bibitem{Fujiwara_Manning}
Koji Fujiwara and Jason~F. Manning, \emph{Simplicial volume and fillings of
  hyperbolic manifolds}, Algebr. Geom. Topol. \textbf{11} (2011), no.~4,
  2237--2264. \MR{2826938}

\bibitem{Gromov_82}
Michael Gromov, \emph{Volume and bounded cohomology}, Inst. Hautes \'Etudes
  Sci. Publ. Math. (1982), no.~56, 5--99 (1983). \MR{686042 (84h:53053)}

\bibitem{Ivanov}
Nikolay~V. Ivanov, \emph{Foundations of the theory of bounded cohomology}, Zap.
  Nauchn. Sem. Leningrad. Otdel. Mat. Inst. Steklov. (LOMI) \textbf{143}
  (1985), 69--109, 177--178, Studies in topology, V. \MR{806562 (87b:53070)}

\bibitem{Loeh}
Clara L\"oh, \emph{Isomorphisms in $l^1$-homology},
M\"unster Journal of Mathematics \textbf{1} (2008),  237--266. \MR{2502500 (2010b:55007)}



\bibitem{Matsumoto_Morita}
Shigenori Matsumoto and Shigeyuki Morita, \emph{Bounded cohomology of certain groups of homeomorphisms},  Proc. Amer. Math. Soc.  \textbf{94}  (1985),  no.~3, 539--544.
\MR{0787909 (87e:55006)}

\bibitem{Monod_book}
Nicolas Monod, \emph{Continuous bounded cohomology of locally compact groups},
  Lecture Notes in Mathematics, vol. 1758, Springer-Verlag, Berlin, 2001.
  \MR{1840942 (2002h:46121)}

\bibitem{Park_hom}
HeeSook Park, \emph{Foundations of the theory of $l_1$ homology},  
J. Korean Math. Soc.  \textbf{41}  (2004),  no.~4, 591--615.
\MR{2068142 (2005c:55011)}

\bibitem{Park}
HeeSook Park, \emph{Relative bounded cohomology}, Topology Appl. \textbf{131}
  (2003), no.~3, 203--234. \MR{1983079 (2004e:55008)}


\bibitem{Thurston_notes}
William P.~Thurston, \emph{Geometry and topology of 3-manifolds}, Notes from Princeton
  University, Princeton, NJ, 1978.

\end{thebibliography}
\end{document}